\definecolor {processblue}{cmyk}{0.96,0,0,0}
\newtheorem{Theorem}{Theorem}[section]
\newtheorem{Lemma}{Lemma}[section]
\newtheorem{Corollary}{Corollary}[section]
\newtheorem{Example}{Example}[section]
\newcommand*{\rom}[1]{\expandafter\@slowromancap\romannumeral #1@}
\begin{document}

\title{xxxx}
\date{}
 \title{Estimating the Hausdorff dimensions of  univoque sets  for self-similar sets}
 \author{Xiu Chen, Kan Jiang\thanks{Corresponding author  }  and Wenxia Li}
\maketitle{}

\begin{abstract}
An approach is given for estimating the Hausdorff dimension of the univoque set   of a self-similar set.
This sometimes allows us to get the exact Hausdorff dimensions of the univoque sets.

\noindent   Key words. Hausdorff dimension;   univoque set; sets of $k$-codings; self-similar sets.

\noindent  AMS Subject Classifications:  28A80, 28A78.

\end{abstract}

\section{Introduction}

  Let  $\{f_{i}\}_{i=1}^{m}$ be an iterated function system (IFS) of contractive similitudes on $\mathbb{R}^{d}$ defined as
 \begin{equation*}\label{Hutchinson formula}
 f_{i}(x)=r_{i}R_{i}x+b_{i},\;\; i\in \Omega =\{1,\ldots ,m\},
\end{equation*}
where $0<r_{i}<1$ is the contractive ratio, $R_{i}$ is an orthogonal transformation and $b_{i}\in \mathbb{R}^{d}.$ Then there exists a unique nonempty compact set $K\subseteq \mathbb{R}^{d}$ satisfying (cf. \cite{Hutchinson})
\begin{equation}\label{sss}
K=\bigcup _{i=1}^m f_i(K).
\end{equation}
The set $K$ is called  the self-similar set generated by the IFS
$\{f_{j}\}_{j=1}^{m}$.
The IFS $\{f_{j}\}_{j=1}^{m}$ is said to  satisfy the open set condition (OSC) (cf. \cite{Hutchinson})  if there exists a non-empty bounded open set $V\subseteq \mathbb{R}^{d}$ such that
$$
V\supseteq \bigcup _{i=1}^mf_i(V)\; \textrm{with disjoint union on the right side}.
$$
Under the open set condition, the Hausdorff dimension of $K$ coincides with  the similarity dimension, denoted by $\dim_SK$, which is the unique solution $s$ of the equation $\sum_{j=1}^{m}r_j^s=1$.
For any  $x \in K$, there exists a sequence
$(i_n)_{n=1}^{\infty}\in\{1,\ldots,m\}^{\mathbb{N}}$ such that
\[x=\lim_{n\to \infty}f_{i_1}\circ \cdots\circ f_{i_n}(0)=\bigcap_{n=1}^{\infty}f_{i_1}\circ \cdots\circ f_{i_n}(K).\]
  Such  sequence $(i_n)_{n=1}^{\infty}$ is called a coding of $x$. The attractor $K$ defined by (\ref{sss}) may equivalently be defined to be the set
of points in $\mathbb{R}^d$ which admit a coding, i.e.,
one can define a surjective projection map between the symbolic space $\{1,\ldots, m\}^{\mathbb{N}}$ and the self-similar set $K$ by
 $$\Pi((i_n)_{n=1}^{\infty}):=\lim_{n\to \infty}f_{i_1} \circ\cdots \circ
f_{i_n}(0).$$
A point  $x\in K$ may
have multiple codings.
$x\in K$  is called a
univoque point if it has only one coding. The set of univoque points is called the  univoque
set,  denoted  by $U$ or $U_1$. Generally, for $k\in \mathbb N$ we set
\begin{equation*}\label{kset}
U_k=\{x\in K: x \;\textrm{has exact }\; k\; \textrm{codings}\}.
\end{equation*}

 The univoque set plays a pivotal  role in  studying the sets of  multiple codings (cf. \cite{KarmaKan, DJKL, KarmaKan2}), e.g., we have
\begin{equation}\label{ksetine}
\dim_{H}U_{k}\leq \dim_{H}U\;\;\textrm{for}\; k\geq2,
 \end{equation}
since
 $U_{k}\subseteq \bigcup_{{\bf i}\in \Omega ^*}f_{\bf i}(U)$ where, as usual, $\Omega ^*=\bigcup _{n=1}^\infty \Omega ^n$.
Therefore, it is crucial to find the Hausdorff dimension of the univoque set for  self-similar sets. There are many papers about the Hausdorff dimension of  $U$ when $K$ is an interval (cf. \cite{SKK, GS, MK,  KKLL, KDLW, BakerG, SN, KO, Sidorov}).

In the present paper, we offer an approach to estimate $\dim _HU$ for general self-similar sets.
 Let $M$ be a nonempty compact subset of $\mathbb{R}^{d}$ satisfying $f_i(M)\subseteq M$ for $1\leq i\leq m$ (so $K\subseteq M$). Let
$$
{\mathcal S} _1=\{{\bf k}\in \Omega:f_{\bf k}(M)\cap f_{\bf j}(M)=\emptyset \,\,\mbox{for all }\,\,{\bf j}\in \Omega \setminus \{\bf k\} \}\;\textrm{and}\; {\mathcal T}_{1}=\Omega \setminus {\mathcal S} _{1}.
$$
For positive integer $i$ let
 \begin{equation}\label{Sdef}
 \begin{split}
 &{\mathcal S} _{i+1}=\{\textbf{k}\in {\mathcal T}_{i}\times \Omega: f_{\bf k}(M)\cap f_{\bf j}(M)=\emptyset \,\,\mbox{for all }\,\,{\bf j}\in ({\mathcal T}_{i}\times \Omega) \setminus \{\textbf{k}\} \},\\
  &{\mathcal T}_{i+1}=({\mathcal T}_i\times \Omega )\setminus {\mathcal S} _{i+1}.
  \end{split}
 \end{equation}
Note that ${\mathcal S}_i$ may be empty for some $i$. Let
\begin{equation}\label{gammaoneway}
\Gamma =\bigcup _{i\geq 1}{\mathcal S} _i.
\end{equation}
It is clear that   $\Gamma $  becomes
    largest when $M$ is taken as $K$.
An ${\bf i}\in \Omega ^{\mathbb N}$ is said to begin with $\Gamma $ if ${\bf i}|k\in \Gamma $ for some $k\in \mathbb N$. Let
\begin{equation}\label{setV}
V=\{{\bf i}\in \Omega ^{\mathbb N}: {\bf i}\;\textrm{does not begin with}\; \Gamma \}.
\end{equation}
In this paper we obtain
\begin{Theorem}\label{thm1}
Let $\Gamma $ and $V$ be defined by (\ref{gammaoneway}) and (\ref{setV}) respectively.
Then
$$
\dim _HU=\max \{\dim _H\Pi \left (\Gamma ^{\mathbb N}\right ), \dim _H\Pi (V\cap \Pi^{-1}(U))\}.
$$
\end{Theorem}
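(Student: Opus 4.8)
The plan is to reduce both inequalities to a single separation lemma and then invoke countable stability of Hausdorff dimension. First I would prove the lemma: if $x\in K$ has a coding $\mathbf{i}$ whose prefix satisfies $\mathbf{i}|\ell\in\mathcal S_\ell$, then every coding $\mathbf{i}'$ of $x$ satisfies $\mathbf{i}'|\ell=\mathbf{i}|\ell$. The argument is an induction on $j$ proving $\mathbf{i}'|j\in\mathcal T_j$ for $1\le j<\ell$. I would first note that $\mathbf{i}|\ell\in\mathcal S_\ell\subseteq\mathcal T_{\ell-1}\times\Omega$ forces $\mathbf{i}|j\in\mathcal T_j$ for every $j<\ell$. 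For the inductive step, $\mathbf{i}'|(j-1)\in\mathcal T_{j-1}$ gives $\mathbf{i}'|j\in\mathcal T_{j-1}\times\Omega=\mathcal S_j\cup\mathcal T_j$; were $\mathbf{i}'|j\in\mathcal S_j$, then since $x\in f_{\mathbf{i}'|j}(M)\cap f_{\mathbf{i}|j}(M)$ and $\mathbf{i}|j\in\mathcal T_j\subseteq\mathcal T_{j-1}\times\Omega$, the disjointness defining $\mathcal S_j$ in (\ref{Sdef}) would be violated (the possibility $\mathbf{i}|j=\mathbf{i}'|j$ being excluded because $\mathcal S_j\cap\mathcal T_j=\emptyset$). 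The same disjointness applied at $j=\ell$, this time with $\mathbf{i}|\ell\in\mathcal S_\ell$, forces $\mathbf{i}'|\ell=\mathbf{i}|\ell$.

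Next I would record that $\Gamma$ is a prefix code: any word of $\mathcal S_i$ has all its proper prefixes inside the $\mathcal T_j$, hence outside $\Gamma$, so no $\Gamma$-word is a prefix of another. Consequently each $\mathbf{i}\in\Gamma^{\mathbb N}$ factors uniquely as $w_1w_2\cdots$ with $w_t\in\mathcal S_{\ell_t}$. Applying the lemma to $w_1$ shows every coding of $x=\Pi(\mathbf{i})$ begins with $w_1$; writing $\mathbf{i}=w_1\mathbf{j}$ and using $\Pi(w_1\mathbf{j})=f_{w_1}(\Pi(\mathbf{j}))$, the tail $\Pi(\mathbf{j})$ has coding $w_2w_3\cdots\in\Gamma^{\mathbb N}$, so the lemma applies again; iterating over blocks shows every coding of $x$ agrees with $\mathbf{i}$ on each $w_1\cdots w_t$, whence $\mathbf{i}$ is the only coding and $x\in U$. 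Thus $\Pi(\Gamma^{\mathbb N})\subseteq U$, and since $\Pi(V\cap\Pi^{-1}(U))\subseteq U$ trivially, monotonicity of $\dim_H$ gives $\dim_H U\ge\max\{\dim_H\Pi(\Gamma^{\mathbb N}),\dim_H\Pi(V\cap\Pi^{-1}(U))\}$.

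For the reverse inequality I would decompose $U$. Given $x\in U$ with unique coding $\mathbf{i}$, if $\mathbf{i}\in V$ then $x\in\Pi(V\cap\Pi^{-1}(U))$; otherwise $\mathbf{i}$ begins with $\Gamma$, its $\Gamma$-prefix $w_1$ is unique by prefix-freeness, and writing $x=f_{w_1}(y)$ with $y=\Pi(\mathbf{j})$, $\mathbf{i}=w_1\mathbf{j}$, one sees $y\in U$ since any second coding of $y$ would, after prepending $w_1$, give a second coding of $x$. Extracting $\Gamma$-blocks repeatedly either terminates with a tail in $V$, giving $x=f_{\mathbf u}(z)$ for a finite concatenation $\mathbf u=w_1\cdots w_n$ of $\Gamma$-words and some $z\in\Pi(V\cap\Pi^{-1}(U))$, or never terminates, giving $\mathbf{i}\in\Gamma^{\mathbb N}$ and $x\in\Pi(\Gamma^{\mathbb N})$. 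Hence
\[
U\subseteq\Pi(\Gamma^{\mathbb N})\cup\bigcup_{\mathbf u}f_{\mathbf u}\big(\Pi(V\cap\Pi^{-1}(U))\big),
\]
the union ranging over the countably many finite words $\mathbf u$ that arise as concatenations of $\Gamma$-words. Each $f_{\mathbf u}$ is a similitude, so $\dim_H f_{\mathbf u}(A)=\dim_H A$, and countable stability of $\dim_H$ yields $\dim_H U\le\max\{\dim_H\Pi(\Gamma^{\mathbb N}),\dim_H\Pi(V\cap\Pi^{-1}(U))\}$; combined with the previous paragraph this gives the claimed equality.

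The main obstacle will be the separation lemma of the first paragraph, specifically ruling out that a competing coding escapes into a separated cylinder $\mathcal S_j$ at some earlier generation $j<\ell$; this is precisely what the inductive bookkeeping with the nested disjointness in (\ref{Sdef}) is designed to prevent. Once the lemma holds, the prefix-freeness of $\Gamma$, the propagation of univoqueness under the similitudes $f_{\mathbf w}$, and countable stability of Hausdorff dimension are all routine.
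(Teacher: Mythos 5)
Your proposal is correct and follows essentially the same route as the paper: the same decomposition $\Omega^{\mathbb N}=V\cup\Gamma^{\mathbb N}\cup\{{\bf u}{\bf i}:{\bf u}\in\Gamma^*,\ {\bf i}\in V\}$, intersected with $\Pi^{-1}(U)$ and finished by countable stability of $\dim_H$ under the similitudes $f_{\bf u}$, with the inclusion $\Pi(\Gamma^{\mathbb N})\subseteq U$ as the key lemma. Your inductive separation lemma is simply a careful, explicit proof of the two facts (cylinders $[{\bf i}]$, ${\bf i}\in\Gamma$, pairwise disjoint, and $f_{\bf i}(K)\cap\Pi(\Omega^{\mathbb N}\setminus[{\bf i}])=\emptyset$) that the paper's Lemma 2.1 asserts directly from the definition of $\Gamma$.
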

Let $s$ be determined by
\begin{equation*}\label{svalue}
 \sum _{{\bf i}\in \Gamma }r_{\bf i}^{s}=1.
 \end{equation*}
 Then we have  $\dim _H\Pi \left (\Gamma ^{\mathbb N}\right )=s$ which will be proved in Lemma \ref{gamma1}. Hence
 \setcounter{Corollary}{1}
\begin{Corollary}\label{coao}
We have  $\dim _HU\geq s$ and the equality holds if and only if $\dim _H\Pi (V\cap \Pi^{-1}(U))\leq s$.
\end{Corollary}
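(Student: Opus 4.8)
The plan is to read this off directly from Theorem \ref{thm1} once the value of $\dim _H\Pi(\Gamma ^{\mathbb N})$ is known. By Lemma \ref{gamma1} we have $\dim _H\Pi(\Gamma ^{\mathbb N})=s$, where $s$ is the unique solution of $\sum_{{\bf i}\in\Gamma}r_{\bf i}^s=1$. Substituting this into the formula of Theorem \ref{thm1} yields the single identity
$$
\dim _HU=\max \{s,\ \dim _H\Pi (V\cap \Pi^{-1}(U))\},
$$
from which both assertions of the corollary follow by elementary reasoning about the maximum of two real numbers.

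First I would note that a maximum of two quantities is always at least each of them, so the displayed identity immediately gives $\dim _HU\geq s$, which is the first claim. For the equivalence I would invoke the elementary fact that, for real numbers $a$ and $b$, one has $\max\{a,b\}=a$ precisely when $b\leq a$. Taking $a=s$ and $b=\dim _H\Pi(V\cap \Pi^{-1}(U))$ shows that $\dim _HU=s$ holds if and only if $\dim _H\Pi(V\cap \Pi^{-1}(U))\leq s$, which is the second claim.

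There is no real obstacle here, since the substantive work is already carried out in Theorem \ref{thm1} and Lemma \ref{gamma1}; the corollary is a purely formal consequence of these two results. The only point that must be in place beforehand is that Lemma \ref{gamma1} genuinely identifies the exponent $s$ defined by $\sum_{{\bf i}\in\Gamma}r_{\bf i}^s=1$ with the dimension $\dim _H\Pi(\Gamma^{\mathbb N})$; granting this, the argument above is complete.
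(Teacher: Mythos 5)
Your proof is correct and matches the paper's own route exactly: the paper derives the corollary by substituting the identity $\dim_H\Pi\left(\Gamma^{\mathbb N}\right)=s$ from Lemma \ref{gamma1} into the formula of Theorem \ref{thm1} and then reading off the two claims from elementary properties of the maximum, precisely as you do.
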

The OSC plays an important role in determining the Hausdorff dimension of a self-similar set. Let us recall that $K$ is generated by the IFS $\{f_i\}_{i=1}^{m}$ in (\ref{sss}). The following fact is obvious:
\begin{equation}\label{Udim}
0<\mathcal{H}^s(U)=\mathcal{H}^s(K)<\infty\;\; \textrm{if}\;  \{f_i\}_{i=1}^{m}\;\textrm{satisfies the OSC},
\end{equation}
where $s$ is given by $\sum _{i=1}^mr_i^s=1$. In fact, we have  $U= K\setminus \left (K^*\cup \bigcup _{{\bf i}\in \Omega ^*} f_{\bf i}(K^*)\right )$
with $K^*=\bigcup_{i\neq j}(f_i(K)\cap f_{j}(K))$ and the OSC implies that  $\mathcal{H}^s(f_i(K)\cap f_{j}(K))=0$ for any  $i\neq j$ (see \cite{Schief}).

From (\ref{Udim}) it follows that $\dim_{H}U=\dim_{H}K=\dim_{S}K$ if the IFS $\{f_i\}_{i=1}^{m}$
  satisfies the open set condition. We shall show that under some extra condition the inverse is also true. An IFS $\{f_i\}_{i=1}^{m}$  is said to have
  an exact overlap if there exist distinct ${\bf i}, {\bf j}\in \Omega ^*$ such that $f_{\bf i}=f_{\bf j}$.  The notion of ``general finite type" appeared
  in the following Lemma  which was posed by Lau and Ngai
  in \cite{LauNgai}. We have
\setcounter{Theorem}{2}
\begin{Theorem}\label{Main1}
Let $K$ be the self-similar set generated by the IFS $\{f_i\}_{i=1}^{m}$. Suppose that $\{f_i\}_{i=1}^{m}$ is of
 general finite type.  Then $\{f_i\}_{i=1}^{m}$ satisfies the open set condition if and only if $\dim_{H}U=\dim_{S}K.$
\end{Theorem}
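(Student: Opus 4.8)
The plan is to handle the two implications separately; the forward one is essentially free, while the reverse one carries all the work and is where the general finite type hypothesis enters.

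For the forward direction, suppose the OSC holds. By (\ref{Udim}) we have $0<\mathcal{H}^{s_0}(U)=\mathcal{H}^{s_0}(K)<\infty$, where $s_0=\dim_S K$ is the solution of $\sum_{i=1}^m r_i^{s_0}=1$; hence $\dim_H U=s_0=\dim_S K$. I would also record the chain $\dim_H U\le \dim_H K\le \dim_S K$, valid for every self-similar set, since it shows the reverse implication is the only nontrivial content.

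For the reverse direction I would prove the contrapositive in the strong form: if the OSC fails, then $\dim_H U<\dim_S K$. The first step invokes general finite type through the Lau--Ngai lemma cited above: when $\{f_i\}_{i=1}^m$ is of general finite type and the OSC fails, there exist distinct $\mathbf{i},\mathbf{j}\in\Omega^*$ with an exact overlap $f_{\mathbf{i}}=f_{\mathbf{j}}$. Neither word can be a prefix of the other (cancelling would force some $f_{\mathbf{k}}=\mathrm{id}$, impossible for a contraction), so they differ at a coordinate not exceeding $\min(|\mathbf{i}|,|\mathbf{j}|)$. The combinatorial core is then that $U$ must avoid the block $\mathbf{i}$: for every $\mathbf{w}\in\Omega^*$ one has $f_{\mathbf{w}\mathbf{i}}(K)=f_{\mathbf{w}\mathbf{j}}(K)$, so any $x$ in this common set has the two genuinely distinct codings $\mathbf{w}\mathbf{i}\mathbf{c}$ and $\mathbf{w}\mathbf{j}\mathbf{c}$ (where $\mathbf{c}$ codes $f_{\mathbf{w}\mathbf{i}}^{-1}(x)$), whence $x\notin U$ and $U\cap f_{\mathbf{w}\mathbf{i}}(K)=\emptyset$. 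Since the unique coding of a univoque point can then begin with no word ending in $\mathbf{i}$, every such coding lies in the subshift $X_{\mathbf{i}}=\{\mathbf{c}\in\Omega^{\mathbb{N}}:\mathbf{c}\ \text{contains no occurrence of}\ \mathbf{i}\}$, giving $U\subseteq\Pi(X_{\mathbf{i}})$.

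The last step is a covering estimate showing $\dim_H\Pi(X_{\mathbf{i}})<s_0$. For each $n$, every $\mathbf{c}\in X_{\mathbf{i}}$ has a length-$n$ prefix $\mathbf{w}$ that still avoids $\mathbf{i}$ and satisfies $\Pi(\mathbf{c})\in f_{\mathbf{w}}(K)$, so the images $\{f_{\mathbf{w}}(K):|\mathbf{w}|=n,\ \mathbf{w}\ \text{avoids}\ \mathbf{i}\}$ cover $\Pi(X_{\mathbf{i}})$, with diameters $r_{\mathbf{w}}\operatorname{diam}(K)$ shrinking to $0$. Thus
$$
\mathcal{H}^{t}_{\delta_n}\bigl(\Pi(X_{\mathbf{i}})\bigr)\le \operatorname{diam}(K)^{t}\,P_n(t),
\qquad
P_n(t):=\sum_{\substack{|\mathbf{w}|=n\\ \mathbf{w}\ \mathrm{avoids}\ \mathbf{i}}} r_{\mathbf{w}}^{\,t}.
$$
Reading $P_n(s_0)$ as the $\mu$-measure of the length-$n$ words avoiding $\mathbf{i}$, where $\mu$ is the Bernoulli measure with weights $p_i=r_i^{s_0}$ (a probability vector), a fixed block occurs $\mu$-almost surely and the probability of avoiding it up to time $n$ decays geometrically, so $P_n(s_0)\le C\rho^n$ with $\rho<1$. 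Writing $r_{\min}=\min_i r_i>0$ and using $r_{\mathbf{w}}^{\,t}\le r_{\min}^{-n(s_0-t)}r_{\mathbf{w}}^{s_0}$ for $t<s_0$ gives $P_n(t)\le C\bigl(r_{\min}^{-(s_0-t)}\rho\bigr)^n\to 0$ once $s_0-t$ is small enough that $r_{\min}^{-(s_0-t)}\rho<1$; for such $t$ we get $\mathcal{H}^t(\Pi(X_{\mathbf{i}}))=0$, hence $\dim_H U\le\dim_H\Pi(X_{\mathbf{i}})\le t<s_0$.

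The main obstacle is the structural input that general finite type together with failure of the OSC forces a genuine exact overlap $f_{\mathbf{i}}=f_{\mathbf{j}}$; this is the step that actually uses the finite-type machinery, and is the one I would expect to have to justify most carefully or cite precisely from \cite{LauNgai}. By contrast, the passage from an exact overlap to block-avoidance is elementary, and the exponential-decay covering estimate is routine, needing only that the ratios are bounded away from $1$ so that $r_{\min}>0$ controls the exponent shift.
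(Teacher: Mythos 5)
Your proof is correct and follows essentially the same route as the paper: the forward direction is (\ref{Udim}), the reverse direction reduces via Lemma \ref{ZYL} to excluding exact overlaps, and your core step --- that $f_{\mathbf{i}}=f_{\mathbf{j}}$ forces every univoque coding to avoid the block $\mathbf{i}$ --- is exactly the paper's argument. The only variation is in the final dimension drop: the paper bounds $\dim_H U$ by the dimension of the sub-self-similar set $K_1$ generated by $\{f_{\mathbf{k}}:\mathbf{k}\in\Omega^{|\mathbf{i}|},\ \mathbf{k}\neq\mathbf{i}\}$, whose similarity dimension is strictly below $\dim_S K$, while you establish the same strict inequality by a direct Bernoulli-measure covering estimate --- and in fact your explicit prefix-cancellation argument (ruling out $\mathbf{i}$ being a prefix of $\mathbf{j}$) tidies a detail the paper leaves implicit.
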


This paper is organized as follows. In section 2, we give the proofs of Theorems \ref{thm1} and \ref{Main1}.
The section 3 is devoted to some examples.

\section{Proof of  Theorems \ref{thm1} and \ref{Main1}}
 Denote by ${\bf i}{\bf j}$ the concatenation of  ${\bf i}, {\bf j}\in \Omega ^*$ and ${\bf i}^k$ stands for the  concatenation of  ${\bf i}$ with itself
 $k$ times.   By $|{\bf i}|$ we denote the length of ${\bf i}\in \Omega ^*$.
   For ${\bf i}=i_1\cdots i_k\in \Omega ^*$ we denote by $[\bf i ]$ the cylinder set based on $\bf i$, i.e.,
  $[{\bf i}]=\{(x_i)\in \Omega ^{\mathbb N}:\; x_i=i_i\;\textrm{for}\; 1\leq i\leq k\}$. For an ${\bf i}=(i_k)_{k\geq 1}\in \Omega ^{\mathbb N}$ let ${\bf i}|p=i_1\cdots i_p$. For ${\bf i}=i_1\cdots i_k\in \Omega ^*$ denote $f_{\bf i}=f_{i_1}\circ \cdots \circ f_{i_k}$ and $r_{\bf i}=\prod _{\ell =1}^kr_{i_\ell}$.

  \begin{Lemma}\label{gamma1}
   Let $\Gamma \subseteq \Omega ^*$ be given by (\ref{gammaoneway}). Then
    $\Pi \left (\Gamma ^{\mathbb N}\right )\subseteq U$ and $\dim _H \Pi \left (\Gamma ^{\mathbb N}\right )=s$ where
   $s$ is determined by  $ \sum _{{\bf i}\in \Gamma }r_{\bf i}^{s}=1.$
  \end{Lemma}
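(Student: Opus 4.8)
The plan is to treat $\Gamma$ as the alphabet of a (possibly infinite) iterated function system $\{f_{\bf w}\}_{{\bf w}\in\Gamma}$ whose limit set is exactly $\Pi(\Gamma^{\mathbb N})$, to deduce uniqueness of codings from the recursive separation built into (\ref{Sdef}), and to read off the dimension from the self-similar structure once the pieces $f_{\bf w}(M)$ are shown to be pairwise disjoint. I would begin by recording two combinatorial facts. Writing $\mathcal T_0\times\Omega:=\Omega$, the sets $\mathcal S_i$ and $\mathcal T_i$ partition $\mathcal T_{i-1}\times\Omega$, so $\mathcal S_i\cap\mathcal T_i=\emptyset$ and $\mathcal S_i\subseteq\mathcal T_{i-1}\times\Omega$. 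A short induction then shows that every ${\bf v}\in\mathcal T_i$ has all its shorter prefixes in the corresponding $\mathcal T$'s, i.e. ${\bf v}|p\in\mathcal T_p$ for $1\le p\le i$. From this it follows that $\Gamma$ is an antichain: if ${\bf w}\in\mathcal S_i$ and ${\bf w}'\in\mathcal S_j$ with $i<j$, then ${\bf w}'|i\in\mathcal T_i$, which is disjoint from $\mathcal S_i\ni{\bf w}$, so ${\bf w}\ne{\bf w}'|i$. Hence the concatenation map $\Gamma^{\mathbb N}\to\Omega^{\mathbb N}$ is injective and $\Pi(\Gamma^{\mathbb N})$ is genuinely the attractor of $\{f_{\bf w}\}_{{\bf w}\in\Gamma}$.

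The inclusion $\Pi(\Gamma^{\mathbb N})\subseteq U$ is the heart of the matter. I would prove the following key claim: if ${\bf w}\in\mathcal S_n$ and $x\in f_{\bf w}(M)$, then every coding ${\bf i}'$ of $x$ satisfies ${\bf i}'|n={\bf w}$. The argument is an induction on $q=1,\dots,n$ showing ${\bf i}'|q\in\mathcal T_{q-1}\times\Omega$. Since $x=\Pi({\bf i}')\in f_{{\bf i}'|q}(K)\subseteq f_{{\bf i}'|q}(M)$ automatically, if ${\bf i}'|q$ landed in $\mathcal S_q$ for some $q<n$ then, because ${\bf w}|q\in\mathcal T_q$ is a distinct element of $\mathcal T_{q-1}\times\Omega$, the defining disjointness of $\mathcal S_q$ in (\ref{Sdef}) would force $f_{{\bf i}'|q}(M)\cap f_{{\bf w}|q}(M)=\emptyset$, contradicting that $x$ lies in both; thus ${\bf i}'|q\in\mathcal T_q$ and the induction proceeds. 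At $q=n$ we get ${\bf i}'|n\in\mathcal T_{n-1}\times\Omega$ with $x\in f_{\bf w}(M)\cap f_{{\bf i}'|n}(M)$, so the separation attached to ${\bf w}\in\mathcal S_n$ gives ${\bf i}'|n={\bf w}$. Applying this to the successive blocks of $x=\Pi({\bf w}_1{\bf w}_2\cdots)$ — after matching ${\bf w}_1$, pass to the left shift $\sigma^{n}{\bf i}'$, which codes $f_{{\bf w}_1}^{-1}(x)=\Pi({\bf w}_2{\bf w}_3\cdots)\in f_{{\bf w}_2}(M)$, and repeat — forces ${\bf i}'={\bf w}_1{\bf w}_2\cdots$, so $x$ is univoque.

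For the dimension I would first upgrade the separation to the statement that the compact sets $\{f_{\bf w}(M)\}_{{\bf w}\in\Gamma}$ are pairwise disjoint (the same case analysis as for the antichain property, using $f_{{\bf w}'}(M)\subseteq f_{{\bf w}'|i}(M)$). The upper bound $\dim_H\Pi(\Gamma^{\mathbb N})\le s$ then follows from the natural covers by the sets $f_{\bf v}(M)$ with ${\bf v}={\bf w}_1\cdots{\bf w}_n$ and each ${\bf w}_j\in\Gamma$: their diameters are at most $\mathrm{diam}(M)\,r_{\max}^{\,n}\to0$ with $r_{\max}=\max_i r_i<1$, while $\sum_{\bf v}(\mathrm{diam}\,f_{\bf v}(M))^s=\mathrm{diam}(M)^s\big(\sum_{{\bf w}\in\Gamma}r_{\bf w}^s\big)^n=\mathrm{diam}(M)^s$ by the definition of $s$, so $\mathcal H^s(\Pi(\Gamma^{\mathbb N}))<\infty$. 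For the lower bound I would exhaust $\Gamma$ from inside by finite sets $\Gamma'\uparrow\Gamma$: each finite subsystem $\{f_{\bf w}\}_{{\bf w}\in\Gamma'}$ has pairwise disjoint compact images, hence satisfies the strong separation condition, so the classical theory gives $\dim_H\Pi((\Gamma')^{\mathbb N})=s'$ with $\sum_{{\bf w}\in\Gamma'}r_{\bf w}^{s'}=1$; and $s'\uparrow s$ because $\sum_{{\bf w}\in\Gamma'}r_{\bf w}^{t}\uparrow\sum_{{\bf w}\in\Gamma}r_{\bf w}^{t}$ for each fixed $t$. Since $\Pi((\Gamma')^{\mathbb N})\subseteq\Pi(\Gamma^{\mathbb N})$, this yields $\dim_H\Pi(\Gamma^{\mathbb N})\ge\sup_{\Gamma'}s'=s$, and combined with the upper bound gives equality.

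The two combinatorial facts and the covering estimate are routine; I expect the main obstacle to be the lower bound when $\Gamma$ is infinite, where no single finite strong-separation argument applies and one must instead pass through the exhaustion $\Gamma'\uparrow\Gamma$ and justify the continuity $s'\uparrow s$ of the similarity exponent. This is exactly the point at which the infinite-system phenomenon (that a limit set can a priori have dimension strictly below the formal similarity solution) is ruled out, so it deserves the most care.
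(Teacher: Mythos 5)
Your proposal is correct and follows essentially the same route as the paper: uniqueness of codings via the separation built into (\ref{Sdef}), the lower bound by exhausting $\Gamma$ with finite subsystems satisfying strong separation and letting the exponents $s'\uparrow s$, and the upper bound via the level-$n$ covers $\{f_{\bf v}(M):{\bf v}\in\Gamma^n\}$ whose $s$-th moment sum is $\bigl(\sum_{{\bf w}\in\Gamma}r_{\bf w}^s\bigr)^n=1$. The only differences are cosmetic — the paper truncates $\Gamma$ by word length and covers with $f_{\bf i}(K)$ rather than $f_{\bf v}(M)$, and it simply asserts as facts (I) and (II) the disjointness and coding-rigidity properties that your combinatorial prefix lemma and key claim prove in detail.
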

\begin{proof}
Note that  by the definition of $\Gamma $ we have

(I) $[{\bf i}], {\bf i}\in \Gamma $ are pairwise disjoint;

   (II) $f_{\bf i}(K)\cap \Pi\left (\Omega ^{\mathbb N}\setminus [{\bf i}]\right )=\emptyset $ for each ${\bf i}\in \Gamma $.

First we show that $\Pi \left (\Gamma ^{\mathbb N}\right )\subseteq U$.
For an $x\in \Pi \left (\Gamma ^{\mathbb N}\right )$ let $x=\Pi((x_k)_{k\geq 1})$ with $(x_k)_{k\geq 1}\in \Gamma ^{\mathbb N}$. Suppose that $(y_k)_{k\geq 1}\in \Omega ^{\mathbb N}$ satisfies that
$x=\Pi((y_k)_{k\geq 1})$. We claim that $(y_k)_{k\geq 1}=(x_k)_{k\geq 1}$. On the contrary, let $\ell $ be the smallest integer such that $y_\ell \neq x_\ell$. Let $(x_k)_{k\geq 1}=({\bf i}_k)_{k\geq1 }$ with ${\bf i}_k\in \Gamma $.
Let $\gamma $ be smallest integer such that $\ell \leq |{\bf i}_1\cdots {\bf i}_\gamma |$. Then
$$
\Pi ((x_k)_{k\geq \delta })=\Pi ((y_k)_{k\geq \delta })\; \textrm{where}\; \delta =|{\bf i}_1\cdots {\bf i}_\gamma |-
|{\bf i}_\gamma |+1.
$$
However, $\Pi ((x_k)_{k\geq \delta })\in f_{{\bf i}_\gamma }(K), {\bf i}_\gamma \in \Gamma $ and
$(y_k)_{k\geq \delta }\notin [{\bf i}_\gamma ]$. This leads a contradiction to the fact $f_{{\bf i}_\gamma }(K)\cap \Pi\left (\Omega ^{\mathbb N}\setminus [{\bf i}_\gamma ]\right )=\emptyset $.

In what follows we prove that $\dim _H \Pi \left (\Gamma ^{\mathbb N}\right )=s$. If $\Gamma $ is finite, then $\Pi \left (\Gamma ^{\mathbb N}\right )$ is a self-similar set generated by the IFS $\{f_{\bf i}: \;{\bf i}\in \Gamma \}$. This IFS satisfies the OSC since $K\supseteq \bigcup _{{\bf i}\in \Gamma }f_{\bf i}(K)$ with disjoint union. Thus $\dim _H\Pi \left (\Gamma ^{\mathbb N}\right )=s$.

In the following we assume that $\Gamma $ is infinite. Denote $\Gamma _{k}=\{{\bf i}\in \Gamma : |{\bf i}|\leq k\}$, $k\in \mathbb N$. Then $\Gamma _{ k}$ is finite (we assume $k$ is big enough such that $\Gamma _{k}\ne \emptyset$). Thus
$$
\dim _H\Pi \left (\Gamma _{ k}^{\mathbb N}\right )=s_k\;\;\textrm{where}\; \sum _{{\bf i}\in \Gamma _{ k}}r_{\bf i}^{s_k}=1.
$$
Therefore, $\dim _H\Pi \left (\Gamma ^{\mathbb N}\right )\geq \sup _ks_k=\lim _{k\rightarrow \infty}s_k=s$ where the last equality can be obtained by the equation $ \sum _{{\bf i}\in \Gamma }r_{\bf i}^{s}=1$ and $\Gamma =\bigcup _{k\geq 1}\Gamma _{ k}$.

Arbitrarily fix a $t>s$. For any $\delta >0$ one can take a big integer $n$ such that each set in $\{f_{\bf i}(K): {\bf i}\in \Gamma ^n\}$ has diameter less that $\delta $. Note that
$$
\sum _{{\bf i}\in \Gamma ^n}|f_{\bf i}(K)|^t=|K|^t\left (\sum _{{\bf i}\in \Gamma }r_{\bf i}^{t}\right )^n\leq |K|^t,
$$
which implies that $\dim _H\Pi \left (\Gamma ^{\mathbb N}\right )\leq t$.
\end{proof}

\begin{proof}[Proof of Theorem \ref{thm1}]
Note that
\begin{equation*}
\begin{split}
\Omega ^{\mathbb N}&=V\cup V^c=V\cup \Gamma ^{\mathbb N}\cup \{{\bf u}{\bf i}: {\bf u}\in \Gamma ^*,\; {\bf i}\in V\}.
\end{split}
\end{equation*}
Thus
$$
\Pi^{-1}(U)=\Gamma^{\mathbb N}\cup (V\cap \Pi^{-1}(U))\cup \{{\bf u}{\bf j}: {\bf u}\in \Gamma ^*, \;{\bf j}\in V\cap \Pi^{-1}(U)\},
$$
which implies the desired result.
\end{proof}
Now we turn to proving  Theorem \ref{Main1}. We need following
\begin{Lemma}\cite[Theorem 2.1]{ZYL}\label{ZYL}
An IFS
 $\{f_i\}_{i=1}^{m}$ satisfies the open set condition if and only if it is of general finite type and has no exact overlaps.
\end{Lemma}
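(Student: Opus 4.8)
The plan is to route the proof through the \emph{weak separation condition} (WSC) and the algebraic characterisation of the OSC, since the finite--type hypothesis is essentially a finiteness statement about overlap configurations and the OSC is essentially a separation statement about the same configurations. To one IFS of similitudes one attaches the family of normalised \emph{neighbour maps}
\[
\mathcal{U}=\bigl\{\,f_{\bf i}^{-1}\circ f_{\bf j}\ :\ {\bf i},{\bf j}\in\Omega^*\ \text{incomparable},\ r_{\bf i}\asymp r_{\bf j},\ f_{\bf i}(K)\cap f_{\bf j}(K)\neq\emptyset\,\bigr\},
\]
viewed inside the (uniform) topology on maps restricted to the convex hull of $K$; note $f_{\bf i}^{-1}\circ f_{\bf j}=\mathrm{id}$ happens exactly when $f_{\bf i}=f_{\bf j}$, i.e.\ for an exact overlap. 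Bandt and Graf proved that the OSC holds if and only if $\mathrm{id}\notin\overline{\mathcal U}$, while Zerner proved that the WSC holds if and only if $\mathrm{id}$ is not an accumulation point of $\mathcal U$. Combining these two facts with the elementary observation above gives at once the key bridge identity: the OSC is equivalent to ``WSC together with the absence of exact overlaps''. Thus the entire lemma reduces to the single assertion that, for this class of systems, \emph{general finite type is equivalent to the weak separation condition}.

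\textbf{Easy direction (OSC $\Rightarrow$ general finite type and no exact overlaps).} That there are no exact overlaps I would prove directly from the feasible open set $V$ of the OSC: if $f_{\bf i}=f_{\bf j}$ for distinct ${\bf i},{\bf j}$, then comparing contraction ratios gives $r_{\bf i}=r_{\bf j}$, and replacing the pair by $({\bf i}{\bf j},{\bf j}{\bf i})$ if necessary (these are distinct words of equal length carrying the same map, distinctness following because $f_{\bf k}\neq\mathrm{id}$ for every nonempty ${\bf k}$) we may assume $|{\bf i}|=|{\bf j}|$. Then $f_{\bf i}(V)$ and $f_{\bf j}(V)$ are two pieces of the same refinement of $V$, hence disjoint by the OSC, whereas $f_{\bf i}=f_{\bf j}$ forces $f_{\bf i}(V)=f_{\bf j}(V)\neq\emptyset$ --- a contradiction. (This is also subsumed in the Bandt--Graf--Zerner package, since an exact overlap puts $\mathrm{id}\in\mathcal U\subseteq\overline{\mathcal U}$, violating the OSC.) The general finite type property then follows because the OSC implies the WSC, and by the bridge WSC yields general finite type.

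\textbf{Hard direction (general finite type and no exact overlaps $\Rightarrow$ OSC).} Here I only need the implication general finite type $\Rightarrow$ WSC: under the standard formulation of general finite type (an invariant compact set carrying a \emph{finite} collection of neighbourhood types together with a transition structure among them) the number of distinct maps $f_{\bf i}$ with $r_{\bf i}\asymp r$ whose pieces meet a fixed ball $B(x,r)$ is bounded uniformly in $x$ and $r$, which is precisely the WSC. Combining the WSC with the assumed absence of exact overlaps gives $\mathrm{id}\notin\mathcal U$ together with $\mathrm{id}$ not an accumulation point of $\mathcal U$, hence $\mathrm{id}\notin\overline{\mathcal U}$; by the Bandt--Graf characterisation this is the OSC.

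\textbf{Main obstacle.} The technical heart is the equivalence general finite type $\Leftrightarrow$ WSC, and in particular the passage from mere \emph{non-accumulation} at $\mathrm{id}$ to the \emph{closed gap} $\mathrm{id}\notin\overline{\mathcal U}$ that the OSC demands. The finite type graph has finitely many states but admits arbitrarily long admissible paths, so a priori a sequence of distinct neighbour maps $g_n\in\mathcal U$ might converge to $\mathrm{id}$; one rules this out by a pigeonhole argument showing that such a sequence eventually repeats a neighbourhood type, and the self-reproducing nature of that type forces the associated comparison map to be \emph{exactly} $\mathrm{id}$, i.e.\ an exact overlap, which is excluded. A secondary difficulty, genuinely present because of the orthogonal parts $R_i$, is that $\mathcal U$ lives in the non-compact similarity group of $\mathbb{R}^d$; one controls this by restricting to comparable ratios $r_{\bf i}\asymp r_{\bf j}$ and to the bounded region where the overlapping pieces sit, so that the relevant part of $\mathcal U$ is precompact (the orthogonal components ranging in the compact group $O(d)$) and the isolation argument can be applied.
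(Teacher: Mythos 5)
First, a point of reference: the paper offers no proof of this lemma at all --- it is quoted verbatim from \cite[Theorem 2.1]{ZYL} --- so your attempt can only be measured against the proof in that literature. Your overall skeleton is the right one and matches it in part: Bandt--Graf gives OSC $\Leftrightarrow$ $\mathrm{id}\notin\overline{\mathcal U}$, Zerner gives WSC $\Leftrightarrow$ $\mathrm{id}$ not an accumulation point of $\mathcal U$, exact overlaps are exactly $\mathrm{id}\in\mathcal U$, whence OSC $\Leftrightarrow$ WSC plus no exact overlaps; and for the hard direction you correctly invoke Lau--Ngai's theorem (\cite{LauNgai}) that generalized finite type implies the WSC. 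The genuine gap is your ``bridge'': you assert that generalized finite type is \emph{equivalent} to the WSC, and you use the converse implication WSC $\Rightarrow$ generalized finite type to settle the easy direction (OSC $\Rightarrow$ WSC $\Rightarrow$ generalized finite type). That converse is not a theorem. Lau and Ngai proved only GFT $\Rightarrow$ WSC and left the reverse open; it has since been established only under additional hypotheses in dimension one (Hare and Rutar, ``When the weak separation condition implies the generalized finite type condition'', 2021), and it is unknown --- plausibly false --- in the generality of this lemma, in $\mathbb{R}^d$ with orthogonal parts. So declaring ``GFT $\Leftrightarrow$ WSC'' the technical heart both overstates what is needed and rests your forward implication on an unproved statement.

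The repair is short and is what the literature actually does: prove OSC $\Rightarrow$ GFT \emph{directly}, not through the WSC. A feasible open set $V$ for the OSC is automatically invariant ($\bigcup_{i}f_i(V)\subseteq V$), so it can serve as the open set in the definition of generalized finite type; under the OSC, distinct incomparable words of the same generation satisfy $f_{\bf i}(V)\cap f_{\bf j}(V)=\emptyset$, hence every cell has trivial neighbourhood and there is exactly one neighbourhood type, which is precisely the finiteness GFT demands. With that substitution your outline becomes sound: forward direction $=$ (no exact overlaps, by your $V$-argument or by $\mathrm{id}\in\mathcal U$) together with (OSC $\Rightarrow$ GFT directly); hard direction $=$ (GFT $\Rightarrow$ WSC, Lau--Ngai) together with ($\mathrm{id}\notin\mathcal U$ and $\mathrm{id}$ not an accumulation point $\Rightarrow$ $\mathrm{id}\notin\overline{\mathcal U}$ $\Rightarrow$ OSC). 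One smaller slip: in your reduction to equal lengths, the words ${\bf i}{\bf j}$ and ${\bf j}{\bf i}$ are distinct not because $f_{\bf k}\neq\mathrm{id}$ for nonempty ${\bf k}$, but because ${\bf i}{\bf j}={\bf j}{\bf i}$ in the free monoid would force ${\bf i}$ and ${\bf j}$ to be powers of a common word, and then $f_{\bf i}=f_{\bf j}$ would equate two distinct powers of a single contraction ratio, which is impossible.
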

\begin{proof}[Proof of Theorem \ref{Main1}]
The necessity follows from (\ref{Udim}). We now prove the sufficiency. Note that $\{f_i\}_{i=1}^{m}$ is of
 general finite type. Thus by Lemma \ref{ZYL} it suffices to show that the IFS $\{f_i\}_{i=1}^{m}$ has no  exact overlaps.
 Otherwise, there exist distinct ${\bf i}, {\bf j}\in \Omega ^*$ such that $f_{\bf i}=f_{\bf j}$. Let $K_1$ be the self-similar set generated
 by the IFS $\{f_{\bf k}: {\bf k}\in \Omega ^{|{\bf i}|}\;\;\textrm{and}\;\; {\bf k}\ne {\bf i}\}$. Then $\dim _HK_1\leq \dim _HK< \dim _SK$.
 On the other hand, for any $x\in U$ its unique coding cannot contain the block ${\bf i}$ and so $x\in K_1$. Thus, $\dim _HU\leq  \dim _HK_1\leq \dim _HK< \dim _SK$, a contradiction!
\end{proof}

\section{Examples}

The result in the following example  was obtained in \cite{ZL} by giving a lexicographical characterization of the unique codings.
Now we  reprove it by applying Theorem \ref{thm1}, which provides a quite different way from that in   \cite{ZL}.

 \begin{Example}\label{EX1}(see \cite{ZL})
Let $K$ be the self-similar set generated by the IFS
$$
\left\{f_{1}(x)=\rho x ,\; f_{2}(x)=\rho x+\rho,\;f_{3}(x)=\rho x +1\right\} \;\textrm{where}\; 0<\rho<(3-\sqrt{5})/2
.$$
Then $\dim_{H}U=\frac{\log \lambda}{-\log \rho},$  where $\lambda\approx 2.3247$ is the appropriate solution of $$x^{3}-3x^{2}+2x-1=0.$$
\end{Example}
\begin{proof}
First one can check that $f_1\circ f_3=f_2\circ f_1$.
 Take $M=[0,(1-\rho)^{-1}]$. Then
$$
f_1(M)\cap f_2(M)=[0, \rho /(1-\rho)]\cap [\rho, (2\rho-\rho^2)/(1-\rho )]=[\rho, \rho /(1-\rho)]
$$
and
$$
f_1(M)\cap f_3(M)= f_2(M)\cap f_3(M)=\emptyset.
$$
\begin{figure}[h]\label{figure1}
\centering
\begin{tikzpicture}[scale=5]
\draw(0,0)node[below]{\scriptsize $0$}--(.618,0)node[below]{\scriptsize$\frac{1}{1-\rho}$};
\draw(0.3,-0.2)node[below]{\scriptsize $\rho$}--(.918,-0.2)node[below]{\scriptsize$\frac{2\rho-\rho^2}{1-\rho}$};
\draw(1.2,-0.4)node[below]{\scriptsize $1$}--(1.818,-0.4)node[below]{\scriptsize$\frac{1}{1-\rho}$};
\node [label={[xshift=1.5cm, yshift=0cm]$f_1(M)$}] {};
\node [label={[xshift=3cm, yshift=-2cm]$f_2(M)$}] {};
\node [label={[xshift=7.5cm, yshift=-2cm]$f_3(M)$}] {};
\end{tikzpicture}\caption{The location of  $f_i(M), i=1,2,3$.}
\end{figure}
Thus one has that ${\mathcal S}_1=\{3\}$ and  ${\mathcal S}_2=\{23\}$. For $k\geq 3$ the sets ${\mathcal S}_k$ becomes a bit complicated. However, it is not so difficult to find out
that $|{\mathcal S}_k|=k-1$ by noting that  $f_1\circ f_3=f_2\circ f_1$, where $|A|$ denotes the cardinality of set $A$. Let $\Gamma =\bigcup _{k\geq 1}{\mathcal S}_k$. Thus by Lemma \ref{gamma1}
$$
\dim _H\Pi \left (\Gamma ^{\mathbb N}\right )=s, \;\textrm{where}\; \rho ^{s}+\rho ^{2s}+\sum_{k=3}^{\infty}(k-1)\rho ^{ks}=1.
$$
It is  an easy exercise to check that $s=\frac{\log \lambda}{-\log \rho}$  where $\lambda\approx 2.3247$ is the appropriate solution of $x^{3}-3x^{2}+2x-1=0.$

Now we show that $\dim _H\Pi(V)\leq s$, where $V=\{{\bf i}\in \Omega ^{\mathbb N}: {\bf i}\;\textrm{does not begin with}\; \Gamma \}$ is as that in Theorem \ref{thm1}. By the geometric structure of $K$ one can see that
for each positive integer $k$, the set $\Pi(V)$ can be covered by $2^k$ many number of intervals of length $\rho^k(\rho-1)^{-1}$. Thus
$$
\mathcal{H}_{\rho^k(\rho-1)^{-1}}^{s}(\Pi(V))\leq (1-\rho)^{-s}(2\rho^{s})^{k}\rightarrow 0\; \textrm{as}\; k\rightarrow \infty
$$
since  $2\rho^{s}<1$. Thus, $\dim_{H}U=\frac{\log \lambda}{-\log \rho}$ by Theorem \ref{thm1}.
\end{proof}

\begin{Example}\label{EX2}
Take $0<\lambda<(3-\sqrt{5})/2$.
 Let  $K$ be the self-similar set generated by the   IFS $\{f_1, \cdots , f_5\}$ where
 $$
 f_{i}(x,y)=(\lambda x, \lambda y)+(a_i, b_i)
 $$
 with $(a_1, b_1)=(0,0), (a_2, b_2)=(1-\lambda, 0), (a_3, b_3)=(1-\lambda, 1-\lambda), (a_4, b_4)=(0, 1-\lambda)$ and $
 (a_5, b_5)= (\lambda(1-\lambda), (1-\lambda)^2)$.
  Then
 $\dim_{H}U=s\approx \dfrac{\log 4.61347}{-\log \lambda}$ where $\lambda ^{3s}-2\lambda ^{2s}+5\lambda ^s-1=0$.
 \end{Example}
 \begin{proof}
 First one can check that $f_4\circ f_2=f_5\circ f_4$.
  Among the squares $f_i([0,1]^2), 1\leq i\leq 5$, only $f_4([0,1]^2)\cap f_5([0,1]^2)\ne \emptyset $ (see Figure 2).
   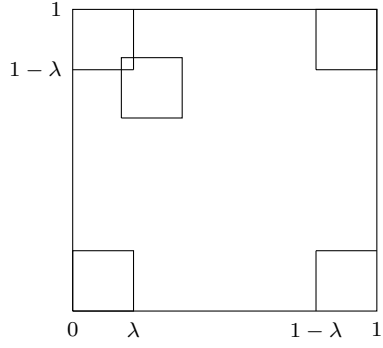
\begin{figure}[h]\label{figure2}
 \centering
\begin{tikzpicture}[scale=4]
\draw(0,0)node[below]{\scriptsize 0}--(.2,0)node[below]{\scriptsize$\lambda$}--(.8,0)node[below]{\scriptsize$1-\lambda$}--(1,0)node[below]{\scriptsize$1$}--(1,1)--(0,1)node[left]{\scriptsize$1$}--(0,.5)--(0,0);
\draw(.8,0)--(0.8,0.2);
\draw(0.8,0.2)--(1,0.2);
\draw(0,0.8)node[left]{\scriptsize $1-\lambda$}--(0.2,0.8);
\draw(0.2,0.8)--(0.2,1);
\draw(0.16,0.64)--(0.36,0.64)--(0.36,0.84)--(0.16,0.84)--(0.16,0.64);
\draw(0,0)--(0.2,0)--(0.2,0.2)--(0,0.2)--(0,0);
\draw(0.8,0.8)--(1,0.8)--(1,1)--(0.8,1)--(0.8,0.8);
\end{tikzpicture}\caption{The locations of squares $f_i([0,1]^2), i=1,2,3,4,5$}
\end{figure}
 Thus ${\mathcal S}_1=\{1,2,3\}$ and  ${\mathcal S}_2=\{41, 43, 51, 52,53\}$. As in above example, for $k\geq 3$ the sets ${\mathcal S}_k$ becomes a bit complicated. However, it is not so difficult to find out
that $|{\mathcal S}_k|=3k-1$ by noting that $f_4\circ f_2=f_5\circ f_4$.
Let $\Gamma =\bigcup _{k\geq 1}{\mathcal S}_k$. Thus by Lemma \ref{gamma1}
we have $\dim _H\Pi \left (\Gamma ^{\mathbb N}\right )=s\approx \dfrac{\log 4.61347}{-\log \lambda}$ where
$$
3\lambda^{s}+5\lambda^{2s}+\sum_{k=3}^{\infty}(3k-1)\lambda^{ks}=1,
$$
which is equivalent to $\lambda ^{3s}-2\lambda ^{2s}+5\lambda ^s-1=0$.

Now we show that $\dim _H\Pi(V\cap \Pi^{-1}(U)))\leq s$,  where $$V=\{{\bf i}\in \Omega ^{\mathbb N}: {\bf i}\;\textrm{does not begin with}\; \Gamma \}$$ is as that in Theorem \ref{thm1}. By the geometric structure of $K$ one can see that
for each positive integer $k$, the set $\Pi(V\cap \Pi^{-1}(U)))$ can be covered by $2^k$ many number of  squares with diameter $\sqrt{2}\lambda^k$. Thus
$$
\mathcal{H}_{\sqrt{2}\lambda^k}^{s}(K_{\alpha})\leq  2^k\sqrt{2}^{s}\lambda^{sk} \rightarrow 0\; \textrm{as}\; k\rightarrow \infty
$$
since $2\lambda^{s}<1$. Thus, $\dim_{H}U=s$ by Theorem \ref{thm1}.
 \end{proof}
 In the above we change the map $f_5$ by letting
 $$
(a_5, b_5)=(\lambda-\lambda^{u+1}, 1-2\lambda+\lambda^{u+1})\;\;\textrm{with}\;\;u\in \mathbb N,
$$
where we require that $\lambda ^{u+1}-3\lambda +1>0$. Then $\dim _HU$ can be also obtained by the same way as in Example \ref{EX2}
and so $\dim _HU_k, k\geq 2$ can be obtained as well. In fact, we have
\begin{Example}\label{EX3}
Suppose that $\lambda \in (0,1), u\in \mathbb N$ satisfy $\lambda ^{u+1}-3\lambda +1>0$.
 Let  $K$ be the self-similar set generated by the   IFS $\{f_1, \cdots , f_5\}$ where
 $$
 f_{i}(x,y)=(\lambda x, \lambda y)+(a_i, b_i)
 $$
 with $(a_1, b_1)=(0,0), (a_2, b_2)=(1-\lambda, 0), (a_3, b_3)=(1-\lambda, 1-\lambda), (a_4, b_4)=(0, 1-\lambda)$ and $
 (a_5, b_5)=(\lambda-\lambda^{u+1}, 1-2\lambda+\lambda^{u+1})$.
  Then
 $$
 \dim_{H}U_{k+1}=\dim_{H}U
 \;\;\textrm{for any }\;\; k\in \mathbb N.
 $$
 \end{Example}
\begin{proof}
By (\ref{ksetine}) we only need to show that $\dim _HU_{k+1}\geq \dim _HU$. This will be done by showing
\begin{equation*}\label{lll}
f_{42^{uk}1}(U)\subseteq U_{k+1}\;\; \textrm{for each }\;\;k\geq 1.
\end{equation*}
 Now arbitrarily fix a point $c\in U$ with the unique coding $(c_i)$.
 We prove that $f_{42^{uk}1}(c)\in U_{k+1}, k\geq 1$
 by induction.

Let $k=1$.
 Note that $x_1=f_{42^u1}(c)=f_{54^u1}(c)\in f_{42^u}([0,1]^2)= f_{54^u}([0,1]^2)$
 and
 $$
 f_{\bf i}([0,1]^2)\cap f_{42^u}([0,1]^2)=\emptyset \;\;\textrm {for all}\;\;{\bf i}\in
 \{1,2,3,4,5\}^{u+1}\setminus \{42^u, 54^u, 54^{u-1}5\}.
 $$
 Hence any coding $(d_i)$ of $x_1$ has to begin with $42^u $, $54^u$ or $54^{u-1}5$. We claim that
 $(d_i)$ cannot begin with $54^{u-1}5$. Otherwise, we have $f_{41} (c)=f_{51}(c)\in f_{41}([0,1]^2)\cap f_{51}([0,1]^2)=\emptyset $.
 On the other hand,
 we have
$\pi (\sigma ^{u+1}(d_i))=\pi (1(c_i))=f_1(c)\in U$ where $\sigma $ is the left shift on $\Omega ^{\mathbb N}$. Thus, $(d_i)$ has to be $42^u1(c_i)$ or
 $54^u1(c_i)$, i.e., $x_1\in U_2$.

 Suppose that $x_k=f_{42^{uk}1}(c)=\pi(42^{uk}1(c_i))\in U_{k+1}$. Let $(d_i)$ be a coding of $x_{k+1}:=f_{42^{u(k+1)}1}(c)$. As before we know that $(d_i)$ has to begin with $42^u $, $54^u$ or $54^{u-1}5$, and so
 $(d_i)$ has to begin with $42^{u-1} $ or $54^{u-1}$.
   Note that
$$x_{k+1}=f_{42^u}(\pi(2^{uk}1(c_i)))=f_{54^u}(\pi(2^{uk}1(c_i)))
=f_{54^{u-1}}(\pi(42^{uk}1(c_i)))=f_{54^{u-1}}(x_k).
$$
For the case that $(d_i)$  begins with $42^{u-1} $ we have that $(d_i)=42^{uk}1(c_i)$ since
$\pi (\sigma ^{u}(d_i))=\pi (2^{uk+1}1(c_i))\in U$. For the case that
$(d_i)$  begins with $54^{u-1}$ we have that $(d_i)$ has exactly $k+1$ many choices since
$$
\pi ((d_i)_{i>u})=\pi (\sigma ^{u}(d_i))=\pi(42^{uk}(c_i))=x_k \in U_{k+1}.
$$
Hence we complete the proof.
\end{proof}

  In the last example we try to describe $\Gamma $ by a way which was  developed in \cite{NW, LauNgai}.
  \begin{Example}
  Let $K$ be the self-similar set generated by the IFS
  $$\left\{f_1(x)=\dfrac{x}{4},\; f_2(x)=\dfrac{x}{4}+\dfrac{9}{17},\; f_3(x)=\dfrac{x+3}{4}\right\}$$
  Then $\dim_{H}U=s,$ where $s$ is the unique solution of the following equation:
  $$
  \dfrac{1}{4^s}+\dfrac{1}{4^{2s}}+\sum_{n=2}^{\infty}(a_n+c_n)\dfrac{1}{4^{(n+1)s}}=1,
  $$
  where $a_n, c_n$ for $n\geq 2$ are determined by
  \begin{equation}\label{numbercou}
  \left (
  \begin{array}{l}
  a_n   \\
  b_n  \\
  c_n  \\
  d_n  \\
  e_n  \\
  \end{array}
  \right )=
  \left (
  \begin{array}{lllll}
  0 & 1 & 0 & 0& 1   \\
  0 & 1 & 0 & 1& 1  \\
1 & 0 & 1 & 0& 0  \\
  1 & 0 & 1& 0& 0  \\
  0 & 1 & 0 & 1& 0   \\
  \end{array}
  \right )^{n-2}\left (
  \begin{array}{l}
  1   \\
  1  \\
  1 \\
  1  \\
  1 \\
  \end{array}
  \right )
  \end{equation}
  \end{Example}
  \begin{proof}
  We take $M=[0,1]$ (one can check that $f_i(M)\subseteq M$ for $i\in \Omega :=\{1,2,3\}$) and label it by $T_1$. Its offspring are
  $$
  f_1(M)=[0, 1/4],\;\; f_2(M)=[9/17, 53/68]\;\textrm{and}\;\; f_3(M)=[3/4, 1].
  $$
   Then (see Figure 3)
  $$
  {\mathcal S}_1=\{1\}\;\;\textrm{and }\; {\mathcal T}_1=\{2,3\}.
  $$
  \begin{figure}[h]\label{figure3}
\centering
\begin{tikzpicture}[scale=5]
\draw(0,0)node[below]{\scriptsize $0$}--(.618,0)node[below]{\scriptsize$1/4$};
\draw(0.9,-0.2)node[below]{\scriptsize $9/17$}--(1.518,-0.2)node[below]{\scriptsize$53/68$};
\draw(1.2,-0.4)node[below]{\scriptsize $3/4$}--(1.818,-0.4)node[below]{\scriptsize$1$};
\node [label={[xshift=1.5cm, yshift=0cm]$f_1(M)$}] {};
\node [label={[xshift=6cm, yshift=-1cm]$f_2(M)$}] {};
\node [label={[xshift=7.5cm, yshift=-3cm]$f_3(M)$}] {};
\end{tikzpicture}\caption{The location of  $f_i(M), i=1,2,3$.}
\end{figure}
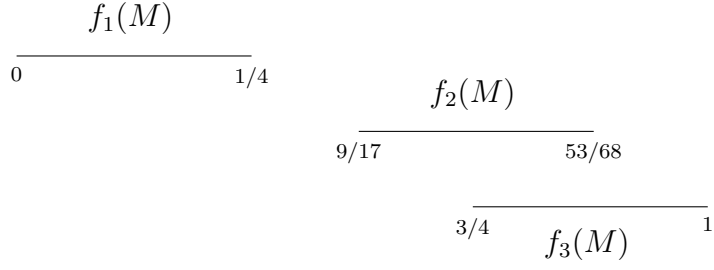

  Note that the offspring of $f_1(M)$ have the same geometric location as the offspring of $M$. So $f_1(M)$
is labeled by $T_1$ as well. We label $f_2(M)$ and $f_3(M)$ by $T_2$ and $T_3$, respectively. Thus one can
 simply denote $M$ and its offspring as follows:
$$
(M, T_1)\rightarrow (f_1(M), T_1)+( f_2(M) T_2)+( f_3(M), T_3).
$$

Now let us calculate $f_{\bf i}(M): {\bf i}\in {\mathcal T}_1\times \Omega $:
\begin{equation*}
\begin{array}{lll}
f_{21}(M)=\left[\frac{9}{17}, \frac{9}{17}+\frac{1}{16} \right], & f_{22}(M)=\left[\frac{45}{68}, \frac{45}{68}+\frac{1}{16}\right], &f_{23}(M)=\left[\frac{195}{272}, \frac{195}{272}+\frac{1}{16}\right]\\
f_{31}(M)=\left[\frac{3}{4}, \frac{3}{4}+\frac{1}{16}\right], &f_{32}(M)=\left[\frac{60}{68}, \frac{60}{68}+\frac{1}{16}\right], &f_{33}(M)=\left[\frac{15}{16}, 1\right].
 \end{array}
 \end{equation*}
 Thus we have (see Figure 4)
 $$
  {\mathcal S}_2=\{21\}\;\;\textrm{and }\; {\mathcal T}_2=({\mathcal T}_1\times \Omega )\setminus {\mathcal S}_2
  =
  \{22, 23, 31, 32, 33\}.
  $$
  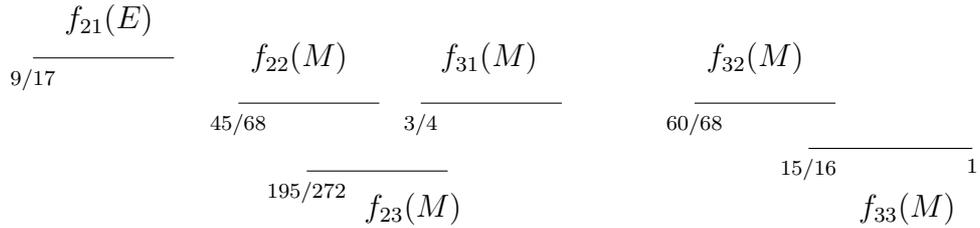
\begin{figure}[h]\label{figure4}
\centering
\begin{tikzpicture}[scale=3]
\draw(0,0)node[below]{\scriptsize $9/17$}--(.618,0)node[below]{};
\draw(0.9,-0.2)node[below]{\scriptsize $45/68$}--(1.518,-0.2)node[below]{};
\draw(1.2,-0.5)node[below]{\scriptsize $195/272$}--(1.818,-0.5)node[below]{};
\draw(1.7,-0.2)node[below]{\scriptsize $3/4$}--(2.318,-0.2)node[below]{};
\draw(2.9,-0.2)node[below]{\scriptsize $60/68$}--(3.518,-0.2)node[below]{};
\draw(3.4,-0.4)node[below]{\scriptsize $15/16$}--(4.118,-0.4)node[below]{\scriptsize $1$};
\node [label={[xshift=1cm, yshift=0cm]$f_{21}(E)$}] {};
\node [label={[xshift=3.5cm, yshift=-0.5cm]$f_{22}(M)$}] {};
\node [label={[xshift=5cm, yshift=-2.5cm]$f_{23}(M)$}] {};
\node [label={[xshift=6cm, yshift=-0.5cm]$f_{31}(M)$}] {};
\node [label={[xshift=9.5cm, yshift=-0.5cm]$f_{32}(M)$}] {};
\node [label={[xshift=11.5cm, yshift=-2.5cm]$f_{33}(M)$}] {};
\end{tikzpicture}\caption{The location of  $f_{\bf i}(M), {\bf i}\in {\mathcal T}_1\times \Omega $}
\end{figure}
  By the same argument as above the offspring $f_{21}(M)$ of $f_2(M)$ has label $T_1$,  while the other two offspring $f_{22}(M)$, $f_{23}(M)$ of $f_2(M)$ will obtain new labels $T_4, T_5$, respectively. This can be simply denoted by
  \begin{equation}\label{ttwo}
  (f_2(M), T_2)\to (f_{21}(M) ,T_1)+(f_{22}(M), T_4)+(f_{23}(M) ,T_5).
  \end{equation}
  Similarly, for the $f_3(M)$ and its offspring we have
\begin{equation}\label{tthree}
(f_3(M), T_3)\to (f_{31}(M), T_6)+ (f_{32}(M), T_2)+(f_{33}(M), T_3).
\end{equation}
Since one knows what will happen for the offspring of $f_{32}(M)$ and $f_{33}(M)$, let us  continue to calculate the offspring of $f_{\bf i}(M): {\bf i}\in {\mathcal T}_2\setminus \{32,33\}=\{22,23,31\} $:
  \begin{equation*}
\begin{array}{lll}
f_{221}(M)=\left[\frac{45}{68},  \frac{45}{68}+\frac{1}{64}\right], &
f_{222}(M)=\left[\frac{189}{272}, \frac{189}{272}+\frac{1}{64}\right], & f_{223}(M)=\left[\frac{771}{1088}
 \frac{771}{1088}+\frac{1}{64}\right]\\
 f_{231}(M)=\left[\frac{195}{272},
  \frac{195}{272}+\frac{1}{64}\right], &
   f_{232}(M)=\left[\frac{51}{68}, \frac{51}{68}+\frac{1}{64}\right], &
  f_{233}(M)=\left[\frac{831}{1088}, \frac{831}{1088}+\frac{1}{64}\right]\\
 f_{311}(M)=\left[\frac{3}{4}, \frac{3}{4}+\frac{1}{64}\right], &
 f_{312}(M)=\left[\frac{213}{272}, \frac{213}{272}+\frac{1}{64}\right], & f_{313}(M)=\left[\frac{51}{64}, \frac{52}{64}\right].
 \end{array}
 \end{equation*}
 Thus we have (see Figure 5)
 \begin{equation}\label{seclev}
 \begin{split}
 &(f_{22}(M), T_4)\rightarrow (f_{221}(M), T_1)+(f_{222}(M), T_4)+(f_{223}(M), T_5)\\
 &(f_{23}(M) ,T_5)\rightarrow  (f_{231}(M) ,T_6)+(f_{232}(M) ,T_2)+(f_{233}(M) ,T_3)\\
 &(f_{31}(M), T_6)\rightarrow  (f_{311}(M), T_2)+(f_{312}(M), T_2)+(f_{313}(M), T_3)
 \end{split}
 \end{equation}
 \begin{figure}[h]\label{figure5}
\centering
\begin{tikzpicture}[scale=2]
\draw(0,0)node[below]{\scriptsize $\dfrac{45}{68}$}--(.618,0)node[below]{};
\draw(0.9,-0.2)node[below]{\scriptsize $\dfrac{189}{272}$}--(1.518,-0.2)node[below]{};
\draw(1.2,-0.7)node[below]{\scriptsize $\dfrac{771}{1088}$}--(1.818,-0.7)node[below]{};
\draw(1.7,-0.2)node[below]{\scriptsize $\dfrac{195}{272}$}--(2.318,-0.2)node[below]{};
\draw(2.9,-0.2)node[below]{\scriptsize $\frac{51}{68}$}--(3.518,-0.2)node[below]{};
\draw(3.4,-0.4)node[below]{\scriptsize $\frac{831}{1088}$}--(4.118,-0.4)node[below]{};
\draw(4.9,-0.2)node[below]{\scriptsize $\frac{213}{272}$}--(5.518,-0.2)node[below]{};
\draw(5.4,-0.4)node[below]{\scriptsize $\frac{51}{64}$}--(6.118,-0.4)node[below]{\scriptsize $\frac{52}{64}$};
\node [label={[xshift=1cm, yshift=0cm]$f_{221}(M)$}] {};
\node [label={[xshift=2.5cm, yshift=-0.5cm]$f_{222}(M)$}] {};
\node [label={[xshift=4cm, yshift=-0.5cm]$f_{231}(M)$}] {};
\node [label={[xshift=6.8cm, yshift=-0.5cm]$f_{232}(M)=f_{311}(M)$}] {};
\node [label={[xshift=7.5cm, yshift=-2.2cm]$f_{233}(M)$}] {};
\node [label={[xshift=3.5cm, yshift=-2.5cm]$f_{223}(M)$}] {};
\node [label={[xshift=10.5cm, yshift=-0.5cm]$f_{312}(M)$}] {};
\node [label={[xshift=11.5cm, yshift=-2.2cm]$f_{313}(M)$}] {};
\end{tikzpicture}\caption{The location of  $f_{\bf i}(M), {\bf i}\in ({\mathcal T}_2\setminus \{32,33\})\times \Omega $}
\end{figure}
It is important to notice that no more labels occur in the above expression.
Note that we have $f_{232}=f_{311}$. Thus $f_{232}(M)$ and $f_{311}(M)$ contribute nothing to $\Gamma $. Therefore, we replace (\ref{seclev}) by
\begin{equation}\label{seclevv}
 \begin{split}
 &(f_{22}(M), T_4)\rightarrow (f_{221}(M), T_1)+(f_{222}(M), T_4)+(f_{223}(M), T_5)\\
 &(f_{23}(M) ,T_5)\rightarrow  (f_{231}(M) ,T_6)+(f_{233}(M) ,T_3)\\
 &(f_{31}(M), T_6)\rightarrow  (f_{312}(M), T_2)+(f_{313}(M), T_3)
 \end{split}
 \end{equation}

It follows from (\ref{ttwo}), (\ref{tthree}) and
(\ref{seclevv}) that only $T_2$ and $T_4$ have contribution to $\Gamma $. Together with (\ref{Sdef}) one knows that the cardinality of ${\mathcal S}_{n+1}$ ($n\geq 2$) equals to the number of $T_2$ and $T_4$ occurring in the
$n$-th generation offspring. By $a_n, b_n, c_n, d_n$ and $e_n$ we denote the number of $T_2, T_3, T_4, T_5$ and $T_6$ occurring in the $n$-th generation offspring. By (\ref{ttwo}), (\ref{tthree}) and
(\ref{seclevv}) we have
\begin{equation*}
  \left (
  \begin{array}{l}
  a_{n+1}   \\
  b_{n+1}  \\
  c_{n+1}  \\
  d_{n+1}  \\
  e_{n+1}  \\
  \end{array}
  \right )=
  \left (
  \begin{array}{lllll}
  0 & 1 & 0 & 0& 1   \\
  0 & 1 & 0 & 1& 1  \\
1 & 0 & 1 & 0& 0  \\
  1 & 0 & 1& 0& 0  \\
  0 & 1 & 0 & 1& 0   \\
  \end{array}
  \right )
   \left (
  \begin{array}{l}
  a_n   \\
  b_n  \\
  c_n  \\
  d_n  \\
  e_n  \\
  \end{array}
  \right ):=A\left (
  \begin{array}{l}
  a_n   \\
  b_n  \\
  c_n  \\
  d_n  \\
  e_n  \\
  \end{array}
  \right ),\;\; n\geq 2.
  \end{equation*}

By (\ref{ttwo}) and (\ref{tthree}) we have
$$
a_2=b_2=c_2=d_2=e_2=1,
$$
and so (\ref{numbercou}) is obtained.
Therefore, we have $\dim _H\Pi \left (\Gamma ^{\mathbb N}\right )=s$, where $s$ is the unique solution of the following equation:
  $$
 1=\dfrac{1}{4^s}+\dfrac{1}{4^{2s}}+\sum_{n=2}^{\infty}|{\mathcal S}_{n+1}|\dfrac{1}{4^{(n+1)s}}= \dfrac{1}{4^s}+\dfrac{1}{4^{2s}}+\sum_{n=2}^{\infty}(a_n+c_n)\dfrac{1}{4^{(n+1)s}}.
  $$

In what follows we will show that $\dim _H\Pi (V\cap \Pi^{-1}(U))\leq s$.
One can check that the spectral radius of $A$ is about $\lambda \approx2.2775$. We claim that $\lambda <4^s$.
In fact, we have $4^s>4^t\approx 2.4693$ where $t$ is determined by
$$
\frac{1}{4^t}+\frac{1}{4^{2t}}+\sum_{n=2}^5(a_n+c_n)\frac{1}{4^{(n+1)t}}=
\frac{1}{4^t}+\frac{1}{4^{2t}}+\frac{2}{4^{3t}}+\frac{4}{4^{4t}}+\frac{9}{4^{5t}}+\frac{21}{4^{6t}}
=1.
$$
 Note that
 $$
 \lim _{n\rightarrow \infty} \mathcal{H}^{s}_{4^{-n-2}}(\Pi (V\cap \Pi^{-1}(U)))\leq \overline{\lim}_{n\rightarrow \infty }(a_{n+1}+b_{n+1}+c_{n+1}+d_{n+1}+e_{n+1})4^{(-n-2)s}<\infty ,
 $$
where the last inequality holds since all the
$a_{n+1},b_{n+1},c_{n+1},d_{n+1},e_{n+1}$ are bounded by $c\lambda ^n$ for some $c>0$, and the fact
$\lambda <4^s$.
\end{proof}

\section*{Acknowledgment}
The first author is granted by the China Scholarship Council grant No. 201606140059. The third author  was
supported by  NSFC No. 11671147, 11571144 and Science and Technology Commission of Shanghai Municipality (STCSM) grant No. 13dz2260400.


\begin{thebibliography}{10}

\bibitem{BakerG}
Simon Baker.
\newblock Generalized golden ratios over integer alphabets.
\newblock {\em Integers}, 14:Paper No. A15, 28, 2014.

\bibitem{SKK}
Simon Baker, Karma Dajani, and Kan Jiang.
\newblock On univoque points for self-similar sets.
\newblock {\em Fund. Math.}, 228(3):265--282, 2015.


\bibitem{DJKL}
Karma Dajani, Kan Jiang, Derong Kong, and Wenxia Li.
\newblock Multiple expansions of real numbers with digits set $\{0,1,q\}$.
\newblock {\em arXiv:1508.06138}, 2015.

\bibitem{KarmaKan2}
Karma Dajani, Kan Jiang, Derong Kong, and Wenxia Li.
\newblock Multiple codings for self-similar sets with overlaps.
\newblock {\em 	arXiv:1603.09304}, 2016.


\bibitem{MK}
Martijn de~Vries and Vilmos Komornik.
\newblock Unique expansions of real numbers.
\newblock {\em Adv. Math.}, 221(2):390--427, 2009.



\bibitem{GS}
Paul Glendinning and Nikita Sidorov.
\newblock Unique representations of real numbers in non-integer bases.
\newblock {\em Math. Res. Lett.}, 8(4):535--543, 2001.


\bibitem{Hutchinson}
John~E. Hutchinson.
\newblock Fractals and self-similarity.
\newblock {\em Indiana Univ. Math. J.}, 30(5):713--747, 1981.


\bibitem{KarmaKan}
 K. Jiang and K. Dajani.
\newblock Subshifts of finite type and self-similar sets.
\newblock {\em Nonlinearity}, 30(2):659--686, 2017.

\bibitem{KO}
Vilmos Komornik.
\newblock Expansions in noninteger bases.
\newblock {\em Integers}, 11B:Paper No. A9, 30, 2011.

\bibitem{KKLL}
Vilmos Komornik, Derong Kong, and Wenxia Li.
\newblock Hausdorff dimension of univoque sets and devil's staircase.
\newblock {\em  Adv. in Math.}, 305(2017), 165--196, 2016.

\bibitem{KDLW}
Derong Kong and Wenxia Li.
\newblock Hausdorff dimension of unique beta expansions.
\newblock {\em Nonlinearity}, 28(1):187--209, 2015.

\bibitem{LauNgai}
Ka-Sing Lau and Sze-Man Ngai.
\newblock A generalized finite type condition for iterated function systems.
\newblock {\em Adv. in Math.}, 208(2):647--671, 2007.




 \bibitem{NW}
Ngai S M, Wang Y.  \newblock Hausdorff dimension of overlapping self-similar sets. \newblock {\em J London Math Soc.}, 63(2): 655-672, 2001.

\bibitem{Schief}
Andreas Schief.
\newblock Separation properties for self-similar sets.
\newblock {\em Proc. Amer. Math. Soc.}, 122(1):111--115, 1994.

\bibitem{Sidorov}
Nikita Sidorov.
\newblock Almost every number has a continuum of {$\beta$}-expansions.
\newblock {\em Amer. Math. Monthly}, 110(9):838--842, 2003.

\bibitem{SN}
Nikita Sidorov.
\newblock Expansions in non-integer bases: lower, middle and top orders.
\newblock {\em J. Number Theory}, 129(4):741--754, 2009.

\bibitem{ZL}
Yuru Zou, Jian Lu, and Wenxia Li.
\newblock Unique expansion of points of a class of self-similar sets with
  overlaps.
\newblock {\em Mathematika}, 58(2):371--388, 2012.

\bibitem{ZYL}
Yuru Zou, Yuanyuan Yao, and Wenxia Li.
\newblock A class of {S}ierpinski carpets with overlaps.
\newblock {\em J. Math. Anal. Appl.}, 340(2):1422--1432, 2008.

\end{thebibliography}
\end{document}